\newtheorem{theorem}{Theorem}[section]
\newtheorem{lemma}[theorem]{Lemma}
\newtheorem{proposition}[theorem]{Proposition}
\theoremstyle{definition}
\newtheorem{definition}[theorem]{Definition}
\newtheorem{problem}[theorem]{Problem}
\theoremstyle{remark}
\definecolor{mygray}{gray}{0.6}
\definecolor{lg}{gray}{0.88}
\definecolor{pakistan}{rgb}{0.0, 0.5, 0.0}
\definecolor{kimidori}{rgb}{0.85,0.93,0.3}
\definecolor{mypink}{rgb}{0.9, 0.0, 0.4}
\definecolor{yamabuki}{rgb}{1.0, 0.86, 0.0}
\definecolor{navy}{rgb}{0.0,0.0,0.7}
\definecolor{darkred}{rgb}{0.7,0.0,0.0}
\title[Ranking top-$k$ trees in tree-based phylogenetic networks] 
{Ranking top-$k$ trees in tree-based phylogenetic networks}
\author{Momoko Hayamizu$^{1, 2}$}
\thanks{$^1$ The Institute of Statistical Mathematics}
\thanks{$^2$ Japan Science and Technology Agency (JST) PRESTO}
\address{The Institute of Statistical Mathematics, Tokyo, Japan}
\email{hayamizu@ism.ac.jp}
\author{Kazuhisa Makino$^{3}$}
\thanks{$^3$ Research Institute for Mathematical Sciences, Kyoto University}
\address{Research Institute for Mathematical Sciences, Kyoto University, Kyoto, Japan}
\email{makino@kurims.kyoto-u.ac.jp}
\subjclass[2010]{05C85 (Primary), 62F07, 68W40, 05C05, 05C20, 05C30, 92D15}
\keywords{phylogenetic tree, tree-based phylogenetic network, support tree, top-k ranking problem}
\begin{document}
\begin{abstract}
`Tree-based' phylogenetic networks proposed by Francis and Steel have attracted much attention of  theoretical biologists in the last few years. At the heart of the definitions of tree-based phylogenetic networks is the notion of `support trees', about which there are numerous algorithmic problems that are important for evolutionary data analysis.  Recently, Hayamizu (arXiv:1811.05849 [math.CO]) proved a structure theorem for tree-based phylogenetic networks and obtained linear-time and linear-delay algorithms for many basic problems on support trees, such as counting, optimisation, and enumeration.  In the present paper, we consider the following fundamental problem in statistical data analysis: given a tree-based phylogenetic network $N$ whose arcs are associated with probability, create the top-$k$ support tree ranking for $N$ by their likelihood values.  
We provide a linear-delay (and hence optimal) algorithm for the problem and thus reveal the interesting property of tree-based phylogenetic networks that ranking top-$k$ support trees is as computationally easy as picking $k$ arbitrary support trees.

\end{abstract}

\maketitle


\section{Introduction}\label{sec:intro}
Although phylogenetic trees have been used as the standard model of evolution, phylogenetic networks have become popular amongst biologists as a tool to describe conflicting signals in data or uncertainty in evolutionary histories~\cite{FHV2018,  FS, Huson}. Therefore, when we wish to reconstruct the phylogenetic tree $T$ on a set $X$ of species from non-tree-like data, a natural idea would be to describe the data using a phylogenetic network $N=(V, A)$ on $X$ and then remove extra arcs to discover an embedding $\tau=(V, S)$ of $T$ inside $N$, where $\tau$ is called a `support tree' of $N$~\cite{FS}. 

However, the above strategy only makes sense when $N$ is `tree-based', namely, $N$ is merely a tree with additional arc~\cite{FS}, which is not always the case~\cite{LeoBlog}. In~\cite{FS},  Francis and Steel provided a linear-time algorithm for finding a support tree  of $N$ if $N$ is tree-based and reporting that it does not exist otherwise.  Another linear-time algorithm for this decision problem was obtained by Zhang in~\cite{LX}. 

While Francis and Steel's work was followed by many studies (\textit{e.g.},~\cite{Owen,fischer2018non, FHV2018, francis2018new, UTBN,pons,LX}), Hayamizu's recent work~\cite{MH2018structural} significantly advanced our understanding of how tree-based networks could be useful in contemporary phylogenetic analysis. In fact, Hayamizu's structure theorem has derived a series of linear-time and linear-delay algorithms for many basic  problems (\textit{e.g.}, counting, enumeration and optimisation) on support trees,  and has thus enabled various data analysis using tree-based phylogenetic networks (see~\cite{MH2018structural} for details).  

In the present paper, we consider a so-called `top-$k$ ranking problem', with the aim to further facilitate the application of  tree-based phylogenetic networks. The problem is as follows: given a tree-based phylogenetic network $N$ where each arc $a$  exists 
in the true evolutionary lineage 
with probability $w(a)>0$, list top-$k$ support trees of $N$ in non-increasing order by their likelihood values. 
We note that this problem is an important generalisation of the top-$1$ ranking problem, which asks for a maximum likelihood support tree of $N$ and can be solved in linear time~\cite{MH2018structural}, since nearly optimal support trees  can provide more biological insights than the maximum likelihood one.

At first glance, ranking top-$k$ support trees may seem more difficult than picking $k$ arbitrary  support trees, the latter of which is possible with linear delay~\cite{MH2018structural}; however, in this paper, we provide a linear-delay (\textit{i.e.}, optimal) algorithm for the top-$k$ ranking problem and thus reveal that the above two problems have the same time complexity, which is an interesting property of tree-based phylogenetic networks.

\section{Preliminaries}\label{sec:preliminaries}
Throughout this paper, $X$ represents a non-empty finite set of present-day species. All graphs considered here are finite, simple, directed acyclic graphs. 
For a graph $G$, $V(G)$ and $A(G)$ denote the sets of vertices and arcs of $G$, respectively. 
A graph $G$ is called a \emph{subgraph} of a graph $H$ if both   $V(G) \subseteq V(H)$ and $A(G) \subseteq A(H)$ hold, in which case we write $G \subseteq H$.  
When $G \subseteq H$ but  $G\neq H$, then $G$ is called a \emph{proper} subgraph of $H$. When $G \subseteq H$ and $V(G)=V(H)$, $G$ is a \emph{spanning} subgraph of $H$.
Given a graph $G$ and a non-empty subset $A^\prime$ of $A(G)$, $A^\prime$ is said to \emph{induce the subgraph  $G[A^\prime]$ of $G$}, that is, the one whose arc-set is $A^\prime$ and whose vertex-set consists of all ends of arcs in $A^\prime$. For a graph $G$  with $|A(G)| \geq 1$ and a partition  $\{A_1,\dots,A_{d}\}$ of $A(G)$, the collection  $\{G[A_1], \dots, G[A_{d}]\}$ of arc-induced subgraphs of $G$ is called a  \emph{decomposition}  of $G$. 
For an arc $a=(u,v)\in A(G)$, $u$ and $v$ are called the tail and head of $a$ and are denoted by $\it tail(a)$ and $\it head(a)$, respectively.
For a vertex $v$ of a graph $G$,  the \emph{in-degree of $v$ in $G$}, denoted by ${\it deg}^-_G(v)$, is defined to be the cardinality of the set $\{a\in A(G)\mid \it head(a)=v\}$. The \emph{out-degree of $v$ in $G$}, denoted by ${\it deg}^+_G(v)$,  is defined in a similar manner.  For any graph $G$, a vertex $v\in V(G)$ with  $({\it deg}^-_G(v), {\it deg}^+_G(v))=(1,0)$ is called a \emph{leaf} of $G$.

\begin{definition}\label{dfn:rbpn}
A \emph{rooted binary phylogenetic $X$-network} is defined to be a finite simple directed acyclic graph $N$ with the following properties:
\begin{enumerate}
  \item $N$ has a unique vertex $\rho$  with ${\it deg}^-_N(\rho)=0$ and ${\it deg}^+_N(\rho)\in \{1,2\}$; 
  \item $X$ is the set of leaves of $N$;
  \item for any $v\in V(N)\setminus (X\cup \{\rho\})$, $\{{\it deg}^-_N(v), {\it deg}^+_N(v)\}= \{1,2\}$ holds.
\end{enumerate}
\end{definition}
In Definition~\ref{dfn:rbpn},  the vertex $\rho$ is called \emph{the root} of $N$, and a vertex  $v\in V(N)$ with $({\it deg}^-_N(v), {\it deg}^+_N(v))= (2,1)$ is called a \emph{reticulation vertex} of $N$. 
When $N$ has no reticulation vertex, $N$ is called a \emph{rooted binary phylogenetic $X$-tree}.

\begin{definition}[\cite{FS}]
If a rooted binary phylogenetic $X$-network $N$ that has a spanning tree $\tau$ that can be obtained by inserting zero or more vertices into each arc of a rooted binary phylogenetic $X$-tree $T$, then $N$ is said to be \emph{tree-based} and $\tau$ is called a \emph{support tree} of $N$. 
\end{definition}

\begin{theorem}[\cite{FS}]\label{thm:bijection}
   Let $N$ be a rooted binary phylogenetic $X$-network and let $S$ be a subset  of $A(N)$. Then, the subgraph $N[S]$ of $N$ is a support tree of $N$ if and only if $S$ satisfies the following three conditions, in which case $S$ is called an `admissible' arc-set of $N$.  
Moreover, there exists a one-to-one correspondence between support trees of $N$ and admissible arc-sets of $N$. 
\begin{enumerate}
\item  $S$ contains all $(u,v)\in A(N)$ with ${\it deg}^-_{N}(v)=1$ or ${\it deg}^+_{N}(u)=1$.
\item for any  $a_1, a_2\in A(N)$ with ${\it head}(a_1)={\it head}(a_2)$, exactly one of $\{a_1, a_2\}$ is in $S$. 
\item for any $a_1, a_2\in A(N)$ with ${\it tail}(a_1)={\it tail}(a_2)$, at least one of $\{a_1, a_2\}$ is in $S$. 
\end{enumerate}  
\end{theorem}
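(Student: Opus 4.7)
The plan is to prove both directions of the biconditional separately, after which the claimed bijection follows immediately because the maps $\tau \mapsto A(\tau)$ and $S \mapsto N[S]$ are mutually inverse on the two sides.

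For the forward direction, I would assume $\tau = N[S]$ is a support tree and read off the three admissibility conditions directly from the degree sequence of $\tau$. Since $\tau$ is a subdivision of some rooted binary phylogenetic $X$-tree and spans $N$, every non-root vertex of $N$ has in-degree exactly $1$ in $\tau$ and every non-leaf vertex of $N$ has out-degree at least $1$ in $\tau$. Condition (1) then follows because any vertex with ${\it deg}^-_N(v) = 1$ has only its unique incoming arc available to meet the in-degree requirement in $\tau$, and any vertex with ${\it deg}^+_N(u) = 1$ (a reticulation, or the root when ${\it deg}^+_N(\rho) = 1$) has only its unique outgoing arc available; condition (2) reflects in-degree $1$ at each reticulation of $N$; and condition (3) reflects out-degree at least $1$ at each tree vertex and at the binary root.

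For the backward direction, assume $S$ satisfies (1)--(3). The core claim is that $N[S]$ is a spanning out-arborescence of $N$ rooted at $\rho$. Conditions (1) and (2) together ensure that every $v \neq \rho$ has exactly one incoming arc in $S$, so $v \in V(N[S])$ and has in-degree $1$ there; condition (1) (when ${\it deg}^+_N(\rho) = 1$) or condition (3) (otherwise) places at least one outgoing arc of $\rho$ into $S$, so $\rho \in V(N[S])$ as well. Hence $N[S]$ spans $N$, has unique source $\rho$, and contains $|V(N)| - 1$ arcs, and acyclicity of $N$ forces $N[S]$ to be a tree: from any vertex, tracing the unique incoming arc backwards terminates at $\rho$, which yields connectedness. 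To upgrade this spanning tree to a support tree, I would verify that the out-degree-$0$ vertices of $N[S]$ are precisely the elements of $X$ (condition (3) handles tree vertices, condition (1) handles reticulations and the unary root) and then suppress every vertex of $N[S]$ with in- and out-degree $1$ to obtain a rooted binary phylogenetic $X$-tree $T$; by construction, $N[S]$ is a subdivision of $T$, so it is a support tree.

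The main obstacle I anticipate is keeping the vertex types organised: $N$ contains four kinds of vertices (the root, which may have out-degree $1$ or $2$; leaves in $X$; tree vertices; reticulations), and the interaction between each type and each of the three admissibility conditions must be tracked carefully, especially at $\rho$, which sits on the boundary between conditions (1) and (3) depending on whether ${\it deg}^+_N(\rho)$ is $1$ or $2$. Once this case analysis is in place, the remainder is a routine translation between the combinatorial admissibility conditions and the graph-theoretic structure of a subdivided phylogenetic tree.
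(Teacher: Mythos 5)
The paper states this theorem as a citation of Francis and Steel~\cite{FS} and supplies no proof of its own, so there is no in-paper argument to compare against; judged on its own terms, your proposal is correct and is the standard argument. Both directions are handled properly: the forward direction reads (1)--(3) off the degree sequence of a spanning subdivision of a binary tree (using that the leaves of $\tau$, of $T$ and of $N$ all coincide with $X$), and the backward direction correctly combines the unique-in-arc property with acyclicity to get a spanning arborescence, then checks that the out-degree-$0$ vertices are exactly $X$ before suppressing the degree-$(1,1)$ vertices. When writing it out, the one small point worth adding is that suppression cannot create parallel arcs (two suppressed paths from a common tail merging would force a vertex of in-degree $2$ in $N[S]$), so the suppressed graph genuinely is a rooted binary phylogenetic $X$-tree; with that, the bijection itself follows as you say because $A(N[S])=S$ and $N[A(\tau)]=\tau$ for spanning trees without isolated vertices.
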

In this paper, as the conditions in Theorem~\ref{thm:bijection} still make sense for any subgraph of $N$, we consider admissible arc-sets of subgraphs of $N$. 

\section{Known results: the structure of support trees}\label{sec:preliminaries1}
Here, we summarise without proofs the relevant material in~\cite{MH2018structural}. 
A connected subgraph $Z$ of  a tree-based phylogenetic $X$-network $N$ with $|A(Z)|\geq 1$  is called a  \emph{zig-zag trail} (\emph{in $N$}) if  there exists a permutation $(a_1,\dots,a_m)$ of $A(Z)$ such that for each $i\in [1,m-1]$, either ${\it head}(a_i)={\it head}(a_{i+1})$ or ${\it tail}(a_i)={\it tail}(a_{i+1})$ holds. 
Then, any zig-zag trail $Z$ in $N$ is specified by an alternating sequence of (not necessarily distinct) vertices and distinct arcs of $N$, such as $(v_0, (v_0, v_1), v_1, (v_2, v_1), v_2, (v_2, v_3), \dots,  (v_m, v_{m-1}), v_m)$, which can be more concisely expressed as  $v_0>v_1<v_2>v_3< \cdots > v_{m-1} < v_m$ or in reverse order. 
	A zig-zag trail $Z$ in $N$ is said to be  \emph{maximal} if $N$ contains no zig-zag trail $Z^\prime$  such that $Z$ is a proper subgraph of $Z^\prime$. 
A maximal zig-zag trail $Z$ with even $m:=|A(Z)|\geq 4$ is called a \emph{crown} if $Z$  can be written in the cyclic form $v_0 < v_1 > v_2 < v_3 > \cdots  > v_{m-2} < v_{m-1} > v_{m} = v_0$ and is called a \emph{fence} otherwise. Furthermore, a fence $Z$ with odd $|A(Z)|$ is called an \emph{N-fence}, in which case $Z$ can be expressed as $v_0 > v_1 < v_2 > v_3 < \cdots >  v_{m-2} < v_{m-1} > v_m$. A fence $Z$ with even $|A(Z)|$ is called an \emph{M-fence} if it can be  written in the form  $v_0 < v_1 > v_2 < v_3 > \cdots > v_{m-2} < v_{m-1} > v_m$, rather than $v_0 > v_1 < v_2 > v_3 < \cdots < v_{m-2} > v_{m-1} < v_m$. 
	
From now on, we represent a  maximal zig-zag trail $Z$  by a sequence $\langle a_1,\dots,a_{|A(Z)|}\rangle$ of the elements of $A(Z)$ that form the zig-zag trail in this order, assuming that no confusion arises. Then, we can encode an arbitrary arc-induced subgraph of $Z$ by an $|A(Z)|$-dimensional vector. For example, for an N-fence $Z=\langle a_1, a_2, a_3, a_4, a_5\rangle$, the subgraph of $Z$ induced by the subset $\{a_1, a_3, a_5\}\subseteq A(Z)$ is specified by the vector $(1\; 0\; 1\; 0\; 1)=(1 (01)^2)$. 
With this notation,  we can state  Hayamizu's structure theorem for tree-based phylogenetic networks, which gives an explicit characterisation of the family $\Omega$ of all admissible arc-sets of $N$ as follows.

\begin{theorem}[\cite{MH2018structural}]\label{structure}
Any tree-based phylogenetic $X$-network $N$ is uniquely decomposed into maximal zig-zag trails $Z_1,\dots, Z_d$, each of which is a crown, M-fence or N-fence. Moreover, a subgraph $G$ of $N$ is a support tree of $N$ if and only if $A(G)\cap A(Z_i)$ is an admissible arc-set of $Z_i$ for any $i\in [1,d]$. Furthermore, the collection $\Omega$ of support trees of $N$  is characterised by a direct product of families $\Omega_1,\dots,\Omega_d$ of the admissible arc-sets of $Z_1,\dots,Z_d$, namely, we have $\Omega=\prod_{i=1}^{d}{\Omega_i}$ with 
\begin{align*}\label{sequence}
\Omega_i := 
  \begin{cases}
      \bigl\{ ((01)^{|A(Z_i)|/2}),\ ((10)^{|A(Z_i)|/2})\bigr\} & \text{if $Z_i$ is a crown;} \\
      \bigl\{ (1(01)^{(|A(Z_i)|-1)/2}) \bigr\}  & \text{if $Z_i$ is an N-fence;} \\ 
    \bigl\{ (1(01)^p(10)^q 1) \mid  p,q\in \mathbb{Z}_{\geq 0}, p+q=(|A(Z_i)|-2)/2\bigr\} & \text{if $Z_i$ is an M-fence.
    } 
   \end{cases}
\end{align*}
\end{theorem}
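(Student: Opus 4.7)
The plan is to prove the theorem in three stages: first, decompose $N$ into maximal zig-zag trails and classify each trail; second, reduce global admissibility on $N$ to local admissibility on each $Z_i$; third, enumerate $\Omega_i$ explicitly for each trail type. For the decomposition, I would define the binary relation $\sim$ on $A(N)$ by $a\sim a^\prime$ iff ${\it head}(a)={\it head}(a^\prime)$ or ${\it tail}(a)={\it tail}(a^\prime)$, and take its reflexive-transitive closure. Since every vertex of $N$ has in- and out-degree at most $2$, each arc is $\sim$-adjacent to at most one other arc via a shared head and at most one via a shared tail, so each equivalence class, viewed as a subgraph of $N$, has ``internal degree'' at most $2$ and is exactly a maximal zig-zag trail of cycle or path shape. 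A cyclic class is a crown, and since head-share and tail-share junctions must alternate around the cycle, $|A(Z)|$ is forced to be even. A path-shaped class is a fence; when $|A(Z)|$ is odd the two tip arcs are oriented oppositely (giving an N-fence), while when $|A(Z)|$ is even the tip arcs have matching orientations, giving either the M-shape or its mirror W-shape. In a W-shape fence, condition~(1) of Theorem~\ref{thm:bijection} forces both tip arcs into any admissible set, but propagating the alternating head-share/tail-share constraints inward eventually clashes at a head-share junction where two forced $1$-bits meet, so no admissible set exists; tree-basedness of $N$ therefore rules out W-shape fences, leaving only the claimed trichotomy.

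For the local reduction, apply Theorem~\ref{thm:bijection}: $G\subseteq N$ is a support tree iff $A(G)$ is admissible, and admissibility condition~(1) concerns a single arc while conditions~(2) and~(3) concern only pairs of arcs sharing a head or a tail. By the construction of $\sim$, every such constraint is internal to some $Z_i$, so $A(G)$ is admissible in $N$ iff $A(G)\cap A(Z_i)$ is admissible in $Z_i$ for every $i\in [1,d]$, giving $\Omega=\prod_{i=1}^{d}\Omega_i$.

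For the enumeration of $\Omega_i$, analyse each type in turn. In a crown the head-share junctions force adjacent bits to differ and the tail-share junctions forbid adjacent $0$-bits; fixing a single head-share choice propagates deterministically around the cycle, and the only globally consistent (cycle-closing) outcomes are the two strictly alternating patterns $(01)^{m/2}$ and $(10)^{m/2}$. In an N-fence, condition~(1) forces both tip arcs into $S$, after which the alternating head- and tail-share constraints uniquely determine every interior bit, producing the single sequence $1(01)^{(m-1)/2}$. In an M-fence both tips are again forced into $S$; the interior bits split into pairs each of which must be either $(0,1)$ or $(1,0)$ by the head-share constraints, and the tail-share constraints force this sequence of pair-choices to be non-decreasing, so the position of the single transition from $(0,1)$-pairs to $(1,0)$-pairs is the only free parameter, parametrised as $1(01)^p(10)^q 1$ with $p+q=(|A(Z_i)|-2)/2$.

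The main obstacle is step~(i), specifically a rigorous proof that the equivalence classes linearise as the claimed zig-zag trails and that tree-basedness rules out W-shape fences so that the parity of $|A(Z_i)|$ correctly matches the endpoint orientations; once that structural classification is in place, steps~(ii) and~(iii) reduce to short local verifications of the three admissibility conditions.
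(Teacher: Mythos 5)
This theorem is quoted in Section~3 explicitly ``without proofs'' as known material from the cited reference, so there is no in-paper argument to compare yours against; I can only assess your reconstruction on its merits, and it is sound. Your three-stage plan is the natural one and almost certainly mirrors the source: the equivalence relation generated by head- and tail-sharing partitions $A(N)$ canonically (giving uniqueness for free), binarity bounds each arc's ``adjacency degree'' by two so the classes are paths or cycles with alternating junction types, and the parity/orientation case analysis correctly isolates crowns, N-fences, M-fences, and the forbidden W-shape. Your elimination of the W-shape is right: condition~(1) of Theorem~\ref{thm:bijection} forces both tip arcs to $1$, and the forced alternation $1,0,1,0,\dots$ propagated from one tip assigns $0$ to the other tip when the length is even, a contradiction; note that the clash can land on either a head-share or a tail-share junction depending on the parity of $|A(Z)|/2$, so it is cleaner to state the contradiction at the far tip arc rather than ``at a head-share junction.'' The local-to-global reduction needs the small observation (which your construction supplies implicitly) that $\mathit{deg}^-_{Z_i}(v)=1$ iff $\mathit{deg}^-_N(v)=1$ for heads of arcs in $Z_i$, since any second in-arc at $v$ would be $\sim$-equivalent and hence lie in the same trail; with that, conditions~(1)--(3) decouple across the $Z_i$ and $\Omega=\prod_i\Omega_i$ follows. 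The enumerations are correct: for the crown a clean way to see that only the two alternating patterns survive is to count --- the $m/2$ head-share pairs force exactly $m/2$ ones, and the $m/2$ tail-share pairs each need at least one, hence exactly one, so adjacent bits always differ; your M-fence argument (monotone switch from $(0,1)$-pairs to $(1,0)$-pairs) is exactly the right mechanism behind the $(1(01)^p(10)^q1)$ parametrisation.
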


\section{Top-$k$ support tree ranking problem}\label{sec:problem.description}
Given a tree-based phylogenetic $X$-network $N$ where each arc $a$ is chosen with probability $w(a)\in (0, 1]$, we  can assign a ranking number to each support tree $\tau\in \Omega$ of $N$ by the likelihood value $f(\tau):=\prod_{a\in A(\tau)}{w(a)}$. 
In principle, the top-$k$ support tree ranking problem for $N$ asks for an ordered set $\langle \tau^{(1)},\dots, \tau^{(k)}\rangle$ of $k$ support trees of $N$ such that  $f(\tau^{(1)})\geq \dots \geq f(\tau^{(k)})\geq f(\tau)$ holds for  any support tree $\tau$ of $N$  other than $\tau^{(i)}$ ($i=1,\dots, k$). However, such a ranking is not unique in general, since there can be `ties' in the collection $\Omega$ of support trees of $N$ as well as in the family $\Omega_i$ of admissible arc-sets of each maximal zig-zag trail $Z_i$ in $N$. 
For convenience, we ensure the uniqueness of the ranking by using the lexicographical order $\leq_{\mathrm{lex}}$ on vectors as follows.

Assume that  $N$ is a tree-based phylogenetic $X$-network with $\Omega=\prod_{i=1}^{d}{\Omega_i}$ as in Theorem~\ref{structure} and that $Z_i$ is any maximal zig-zag trail in $N$. We define the \emph{local ranking for $Z_i$}  to be a totally ordered set $(\Omega_i, \leq^*)$ such that for any $x, y \in \Omega_i$, $x \leq^* y$ holds if  either    $f(x)> f(y)$ or $(f(x) = f(y) \land x \leq_{\mathrm{lex}} y)$ holds. 
Note that the elements of $\Omega_i$ are $|A(Z_i)|$-dimensional vectors and any two of them are comparable lexicographically. From now, we identify the $j$-th element of $(\Omega_i, \leq^*)$ with its local ranking number $j\in \{1,\dots, |\Omega_i|\}$ in order to write $\Omega=\prod_{i=1}^{d}{\{1,\dots, |\Omega_i|\}}$. Then, the elements of $\Omega$ are vectors having the same dimension  again and so we can break ties by using $\leq_{\mathrm{lex}}$  as before. Abusing the notation $\leq^*$ slightly, we call  the totally ordered set $(\Omega, \leq^*)$ the \emph{support tree ranking} (\emph{for $N$}). For any $k\in \mathbb{N}$ with $k\leq |\Omega|$, the \emph{top-$k$ support tree ranking} (\emph{for $N$}) is defined to be a unique subsequence of the first $k$ elements of $(\Omega, \leq^*)$. 
Note that for any $k\in \mathbb{N}$, one can  determine  in $O(|A(N)|)$ time  whether or not $k\leq |\Omega|$ holds~\cite{MH2018structural}.

\begin{problem}
\textbf{Top-$k$ support tree ranking problem}\label{prob}\\ 
\textbf{Input: } 
A tree-based phylogenetic $X$-network $N$ with associated probability $w: A(N)\rightarrow (0,1]$ and $k\in \mathbb{N}$ not exceeding the number $|\Omega|$ of support trees of $N$.\\
\textbf{Output: } 
The top-$k$ support tree ranking $\langle \tau^{(1)},\dots, \tau^{(k)}\rangle$ for $N$.
\end{problem}

\section{Results}\label{sec:results}
As a preliminary step, we prove the following proposition about the local ranking. 
\begin{proposition}\label{prop:preprocessing}
For any  maximal zig-zag trail $Z_i$ in a tree-based phylogenetic $X$-network $N$ with associated probability $w: A(N)\rightarrow (0,1]$, the first element in  the local ranking $(\Omega_i, \leq^*)$ can be found in $O(|A(Z_i)|)$ time. Moreover, given the $j$-th  element in $(\Omega_i, \leq^*)$, one can find the $(j+1)$-th  element in $O(|A(Z_i)|)$ time. 
\end{proposition}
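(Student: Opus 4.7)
The plan is to split into three cases according to the classification in Theorem~\ref{structure}. If $Z_i$ is an N-fence then $|\Omega_i| = 1$, so the first (and only) element $(1(01)^{(|A(Z_i)|-1)/2})$ can be written down in $O(|A(Z_i)|)$ time and the ``$(j+1)$-th'' claim is vacuous. If $Z_i$ is a crown then $|\Omega_i|=2$, so I would compute the two likelihoods in $O(|A(Z_i)|)$ time, break ties by $\leq_{\mathrm{lex}}$, and output the two vectors in order.

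The substantive case is when $Z_i$ is an M-fence. Writing $n:=|A(Z_i)|$ and $m:=n/2$, I would parameterise $\Omega_i$ by $p \in \{0,1,\ldots,m-1\}$ via $V_p := (1(01)^p(10)^{m-1-p}1)$ and let $f_p := f(V_p)$. The first key step is a direct inspection of the two patterns showing that $V_{p-1}$ and $V_p$ agree everywhere except at positions $2p$ and $2p+1$, where $V_{p-1}$ reads $(1,0)$ and $V_p$ reads $(0,1)$. This yields the recurrence
\[
f_p \;=\; f_{p-1}\cdot\frac{w(a_{2p+1})}{w(a_{2p})} \qquad (p=1,\ldots,m-1),
\]
so that, after computing $f_0$ directly in $O(n)$ time, all remaining $f_p$ can be obtained in $O(n)$ total time by a single sweep.

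The second key step is a lex comparison: for $p<p'$, the first coordinate at which $V_p$ and $V_{p'}$ differ is $2p+2$, where $V_p$ has $1$ and $V_{p'}$ has $0$; hence $V_{p'}<_{\mathrm{lex}}V_p$. Therefore $\leq^*$ on $\Omega_i$ is precisely the order on the pairs $(f_p,p)$ by $f_p$ descending with ties broken by $p$ descending. I would then build a max-heap on these $m$ pairs under this comparator in $O(m)=O(n)$ time by bottom-up heap construction. Finding the first element amounts to one extract-max plus $O(n)$ to write down $V_p$; each subsequent query is again one extract-max plus $O(n)$ output, hence $O(n)$ per query as required.

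The main obstacle is not algorithmic but careful indexing inside the patterns $(1(01)^p(10)^{m-1-p}1)$: it is easy to be off by one when identifying exactly which two positions flip from $V_{p-1}$ to $V_p$, or which coordinate first distinguishes $V_p$ from $V_{p'}$. Once those position-level claims are pinned down, the likelihood recurrence, the identification of $\leq^*$ with the claimed order on $(f_p,p)$, and the heap-based enumeration are all routine.
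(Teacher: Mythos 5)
Your proof is correct and follows essentially the same route as the paper: dispose of crowns and N-fences as trivial since $|\Omega_i|\leq 2$, and for an M-fence exploit the fact that consecutive admissible vectors differ only in positions $2p$ and $2p+1$ to compute all likelihood values by a single linear sweep. The only divergence is that the paper states an additive update $f(x_{p+1})=f(x_p)+\bigl(w(a_{2p+1})-w(a_{2p})\bigr)$, which is literally valid only for a sum (or log-likelihood) formulation, whereas your multiplicative ratio $w(a_{2p+1})/w(a_{2p})$ is the correct form for $f(\tau)=\prod_{a\in A(\tau)}w(a)$ as defined; your explicit handling of the lexicographic tie-break and the heap merely fills in details the paper leaves implicit.
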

\begin{proof}
One can check in $O(|A(Z_i)|)$ time whether $Z_i$ is a crown, N-fence or M-fence. In the case when $Z_i$ is a crown or N-fence, the local ranking for $Z_i$ is trivial to compute  as $|\Omega_i|\leq 2$ holds by  Theorem~\ref{structure}. Assume that $Z_i$ is an M-fence $\langle a_1,\dots,a_{2m}\rangle$ with $|A(Z_i)|=2m$. Also, let $x_{p+1}:=(1 (01)^p (10)^q 1)$ with  $p+q=m-1$ for each $p\in [0, m-1]$ and let $\Delta_p:=w(a_{2p+1})-w(a_{2p})$ for each $p\in [1, m-1]$. 
Then, $f(x_{p+1})=f(x_p)+\Delta_p$ holds for each $p\in [1, m-1]$. As one can obtain both $f(x_1)$ and $\langle \Delta_1,\dots, \Delta_{m-1}\rangle$ in $O(|A(Z_i)|)$ time, computing the likelihood values $f(x)$ for all $x\in \Omega_i$ requires  $O(|A(Z_i)|)$ time. This completes the proof. 
\end{proof}

We define $I_0 := \emptyset$ and $I_{j} := \{\tau^{(0)}, \dots, \tau^{(j)}\}$ for each $j\in[1, k]$. 
 Recalling $\Omega=\prod_{i=1}^{d}{\{1,\dots, |\Omega_i|\}}$, we see that $(\Omega, \leq^*)$ is a linear extension of the partially ordered set $(\Omega, \leq)$ (\textit{i.e.}, $x \leq y$ implies $x \leq^* y$), where $\leq$ is the usual component-wise order on vectors (\textit{e.g.}, $(x_1\; x_2)\leq (y_1\; y_2)$ if and only if $x_1\leq y_1$ and $x_2\leq y_2$). 
 We also note that this requires each $I_j$ to be an order ideal of $(\Omega, \leq)$ (\textit{i.e.}, for any $x\leq y$, $y\in I_j$  implies $x\in I_j$). 
 These arguments lead to the following proposition.
 
\begin{proposition}
Let $\langle \tau^{(1)},\dots, \tau^{(k)}\rangle$  be the top-$k$ support tree ranking for a tree-based phylogenetic $X$-network $N$  with associated probability $w: A(N)\rightarrow (0,1]$ and let $I_j$ be as defined above.
Then, $I_1=\{(1\; \dots\; 1)\}$ holds, and  for each $j\in [1,k-1]$, there exists $\tau\in I_j$ with $\| \tau^{(j+1)} - \tau \|_1 = 1$.	
\end{proposition}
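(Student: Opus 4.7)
The plan is to exploit the observation, already noted in the paragraph preceding the proposition, that $(\Omega, \leq^*)$ is a linear extension of the componentwise partial order $(\Omega, \leq)$ on $\prod_{i=1}^{d}\{1,\dots,|\Omega_i|\}$, and that consequently each prefix $I_j$ is a down-set (order ideal) of $(\Omega,\leq)$. Once this structural fact is in hand, both parts of the statement follow by elementary considerations about the componentwise minimum.

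First I would handle the base case $I_1=\{(1\;\dots\;1)\}$. Since $(1\;\dots\;1)$ is the unique $\leq$-minimum of $\Omega$, we have $(1\;\dots\;1)\leq \tau$ and hence $(1\;\dots\;1)\leq^* \tau$ for every $\tau\in\Omega$. As $\leq^*$ is a total order, this forces $\tau^{(1)}=(1\;\dots\;1)$.

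Next I would argue that each $I_j$ is a down-set of $(\Omega,\leq)$. If $y\in I_j$ and $x\leq y$ with $x\neq y$, then linear-extension compatibility gives $x<^* y$, so $x$ must appear strictly before $y$ in the ranking and therefore lies in $I_j$. Now fix $j\in[1,k-1]$ and write $\tau^{(j+1)}=(x_1,\dots,x_d)$. Since $\tau^{(1)}=(1\;\dots\;1)\in I_j$ while $\tau^{(j+1)}\notin I_j$, we have $\tau^{(j+1)}\neq(1\;\dots\;1)$, so there is some coordinate $i$ with $x_i>1$. Let $\tau$ be the vector obtained from $\tau^{(j+1)}$ by replacing its $i$-th entry $x_i$ with $x_i-1$. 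Then $\tau\in\Omega$ and $\tau<\tau^{(j+1)}$ componentwise, so by the down-set property $\tau\in I_j$, and $\|\tau^{(j+1)}-\tau\|_1=1$ by construction.

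There is essentially no hard step in this argument; the only conceptual move is recognising the order-ideal structure of the initial segments of a linear extension. The proof is in this sense dictated by the setup in Section~\ref{sec:problem.description}, and will later be the combinatorial backbone of the top-$k$ enumeration algorithm, since it guarantees that $\tau^{(j+1)}$ can always be generated from an element already in $I_j$ by a single unit bump in one coordinate of the local-ranking vector.
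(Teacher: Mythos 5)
Your proof is correct and follows essentially the same route as the paper, which states this proposition without a written proof and simply points to the preceding observations that $(\Omega,\leq^*)$ is a linear extension of the componentwise order and that each $I_j$ is an order ideal; your write-up is just the natural elaboration of that sketch. One small imprecision: when you conclude $\tau\in I_j$, you should apply the down-set property to $I_{j+1}$ (which contains $\tau^{(j+1)}$) and then discard $\tau^{(j+1)}$ itself since $\tau\neq\tau^{(j+1)}$ --- or equivalently note directly that $\tau<^*\tau^{(j+1)}$ forces $\tau$ to occupy one of the first $j$ positions --- rather than invoking the down-set property of $I_j$, which does not contain $\tau^{(j+1)}$.
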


Let $e_i$ be the unit vector such that $i$-th component is one and the others are all zeros. Also, for each $\tau\in \Omega\setminus \{\tau^{(1)}\}$, let $\mathrm{id}(\tau)$ be the first index such that the $i$-th component of $v$ is  strictly greater than one and let $e(\tau):=e_{\mathrm{id}(\tau)}$. 
For example, $\tau=(1\; 1\; 1\; 5\; 8)$ gives $e(\tau)=(0\; 0\; 0\; 1\; 0)$. Then, we have the next lemma, which is illustrated in Figure~\ref{fig:threefigs}. 

\begin{lemma}\label{lem:graph.representation}
Let $(\Omega, \leq^*)$ be the support tree ranking for a tree-based phylogenetic $X$-network $N$  with associated probability $w: A(N)\rightarrow (0,1]$ and let  $\Gamma$ be a graph with $V(\Gamma)=\Omega$ and 
$A(\Gamma)=\{(\tau, \tau^\prime)\in \Omega\times (\Omega\setminus \{\tau^{(1)}\}) \mid \tau = \tau^\prime - e(\tau^\prime)\}$. 
Then, $\Gamma$  is a spanning tree of the Hasse diagram of $(\Omega, \leq)$ such that  $\tau^{(1)}$  is the root of $\Gamma$ and $(\tau, \tau^\prime)\in A(\Gamma)$ implies $\tau\leq^* \tau^\prime$. 
\end{lemma}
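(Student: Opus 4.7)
The plan is to verify three things in turn: every arc of $\Gamma$ is a covering relation in the Hasse diagram of $(\Omega,\leq)$; $\Gamma$ is a spanning tree of that Hasse diagram rooted at $\tau^{(1)}$; and every arc of $\Gamma$ respects the linear order $\leq^*$.

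First, given any arc $(\tau,\tau')\in A(\Gamma)$, I would set $j:=\mathrm{id}(\tau')$, so that $\tau'_j\geq 2$ and $\tau=\tau'-e_j$. Then $\tau_j=\tau'_j-1\geq 1$ and all other coordinates of $\tau$ coincide with those of $\tau'$, hence $\tau\in \Omega=\prod_{i=1}^{d}\{1,\ldots,|\Omega_i|\}$. Since $\tau$ and $\tau'$ differ only in coordinate $j$ and by exactly $1$, the pair $(\tau,\tau')$ is a covering relation of $(\Omega,\leq)$, which already puts $\Gamma$ inside the Hasse diagram.

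Second, I would analyse the parent map $\pi:\Omega\setminus\{\tau^{(1)}\}\to\Omega$ defined by $\pi(\tau'):=\tau'-e(\tau')$. By construction of $A(\Gamma)$, each non-root vertex $\tau'$ has exactly one in-arc in $\Gamma$, namely $(\pi(\tau'),\tau')$, while $\tau^{(1)}=(1,\ldots,1)$ has no in-arc. Iterating $\pi$ strictly decreases the non-negative integer potential $\sum_{i=1}^{d}(\tau_i-1)$, so the iteration must terminate at the unique vertex at which $\pi$ is undefined, namely $\tau^{(1)}$. Hence every $\tau\in\Omega$ is reachable from $\tau^{(1)}$ through a directed path of $\Gamma$; combined with $|A(\Gamma)|=|\Omega|-1$, this proves that $\Gamma$ is an arborescence rooted at $\tau^{(1)}$, and by the first step it is a spanning tree of the Hasse diagram.

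Third, for any arc $(\tau,\tau')\in A(\Gamma)$ the first step gives $\tau<\tau'$ componentwise, and the text immediately preceding the lemma records that $(\Omega,\leq^*)$ is a linear extension of $(\Omega,\leq)$. Therefore $\tau\leq^*\tau'$, which is the final claim. I do not expect any serious technical obstacle in this proof; the only point requiring care is to confirm that $\pi$ is well-defined (which uses $\tau'_{\mathrm{id}(\tau')}>1$ and the product structure of $\Omega$) and that iterating $\pi$ terminates at $\tau^{(1)}$, both of which are immediate consequences of the potential argument above.
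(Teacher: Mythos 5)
Your proof is correct and follows essentially the same route as the paper's: the paper also observes that each arc gives $\tau\leq\tau'$ (hence $\tau\leq^*\tau'$ by the linear-extension remark) and that $\Gamma$ is a tree because $\tau^{(1)}$ has in-degree zero while every other vertex has a unique in-arc. Your version merely spells out two points the paper leaves implicit — that each arc is a covering relation of $(\Omega,\leq)$ (so $\Gamma$ really sits inside the Hasse diagram) and that iterating the parent map terminates at $\tau^{(1)}$ (which rules out disconnected cyclic components) — both of which are sound.
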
 
\begin{proof}
It is clear that $(\tau, \tau^\prime)\in A(\Gamma)$ implies $\tau\leq \tau^\prime$ (and hence $\tau\leq^* \tau^\prime$). By construction, $\Gamma$ is a tree rooted at $\tau^{(1)}$ because ${\it deg}^-_\Gamma(\tau^{(1)})=0$ holds and for each  $\tau^\prime \in V(\Gamma)\setminus \{\tau^{(1)}\}$, there exists a unique element $\tau\in V(\Gamma)$ with $(\tau, \tau^\prime)\in A(\Gamma)$. This completes the proof.
\end{proof}

\begin{figure}[htbp]
\centering
\includegraphics[scale=.7]{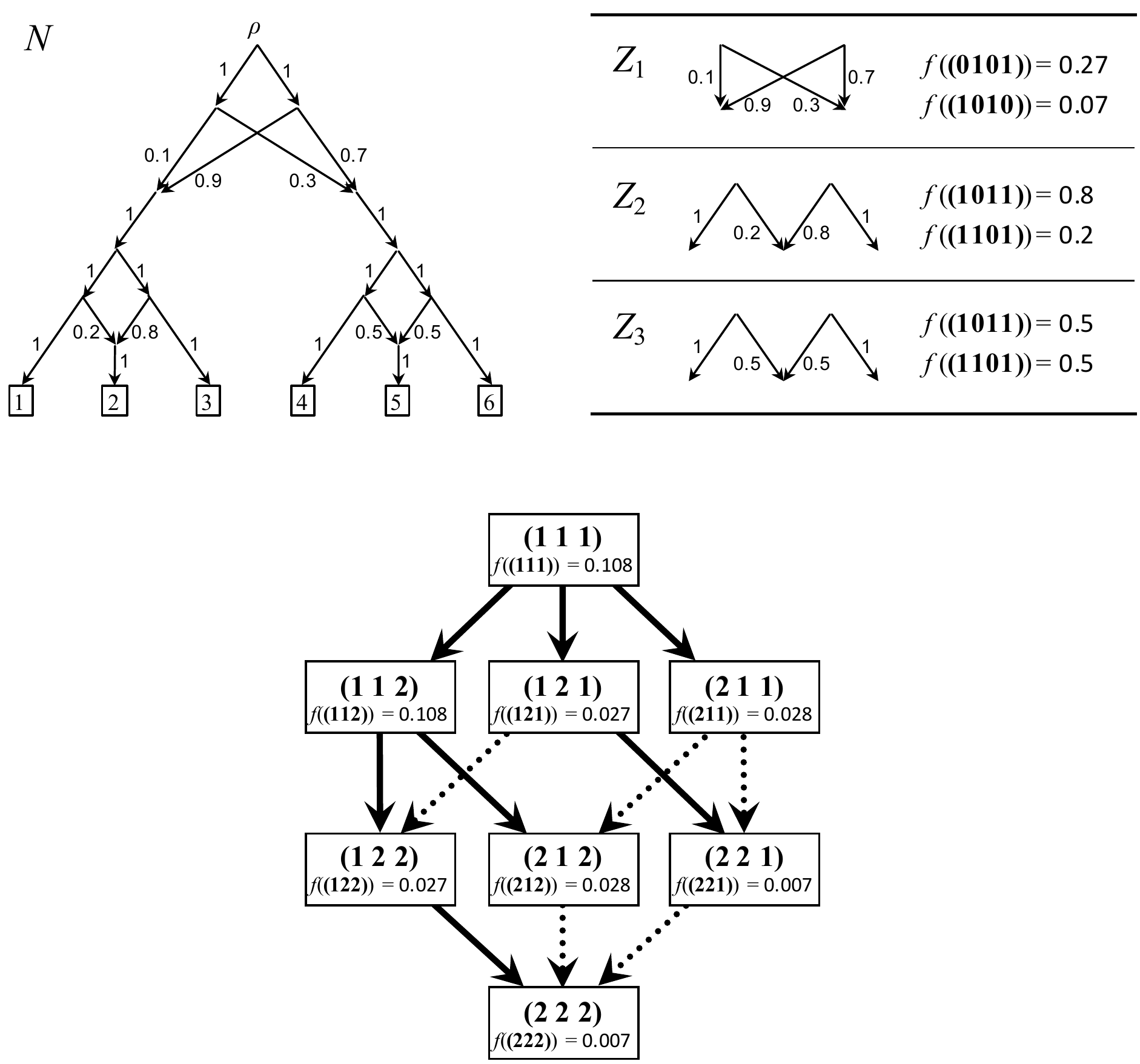}
\caption{An illustration of Lemma~\ref{lem:graph.representation}. The top left is a tree-based phylogenetic $X$-network $N$ whose arcs are associated with probability. 
The top right shows the maximal zig-zag trails $Z_i$ in $N$ with $|\Omega_i|\geq 2$
 and the likelihood of each element of $(\Omega_i, \leq^*)$ ($i=1, 2, 3$). On the bottom is the spanning tree $\Gamma$ (shown in bold) of the Hasse diagram of   $(\Omega, \leq)$.
\label{fig:threefigs}}
\end{figure}

In what follows, for any $\Omega^\prime \subseteq \Omega$, we write $\it least(\Omega^\prime)$ to mean the least element of $(\Omega^\prime, \leq^*)$. For any $\tau\in \Omega$,  let ${\it Child}(\tau):=\{\tau^\prime\in V(\Gamma) \mid (\tau, \tau^\prime)\in A(\Gamma)\}$ and $\it child^*(\tau):=least(Child(\tau))$. Also, for any $\tau^\prime\in \Omega \setminus \{\tau^{(1)}\}$,  let  $\it sibling^*(\tau^\prime):=least(\{\tau\in Child(parent(\tau^\prime)) \mid \tau^\prime \leq^* \tau \land \tau \neq \tau^\prime\})$, where ${\it parent}(\tau^\prime)$ represents a unique element $\tau \in V(\Gamma)$  with $(\tau, \tau^\prime)\in A(\Gamma)$.  We note that both $\it child^*(\tau^\prime)=\emptyset$ and $\it sibling^*(\tau^\prime)=\emptyset$ are possible to occur.

\begin{lemma}\label{recursionQ}
Let $\Gamma$ be the graph as in Lemma~\ref{lem:graph.representation} and let $Q_{j}$ be a subset of $V(\Gamma)$ that is recursively defined by  
\begin{equation}\label{eq:Q}
 Q_{j}:=
 \begin{cases}
(Q_{j-1}\setminus \{\tau^{(j-1)}\})\cup \{\it child^*(\tau^{(j-1)}), \it sibling^*(\tau^{(j-1)})\} & j \in[2,k] \\
 \{\tau^{(1)}\} & j=1. 	
 \end{cases}
 \end{equation}
Then, for  each $j\in [1, k]$, we have  $\tau^{(j)} \in Q_{j}$ and  $\tau^{(\ell)} \not\in Q_{j}$ for all $\ell < j$.   
\end{lemma}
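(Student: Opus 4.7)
The plan is to prove the lemma by induction on $j$, with a strengthened inductive hypothesis describing $Q_j$ as the frontier of the unexplored subtrees of $\Gamma$ hanging off the order ideal $I_{j-1}$. To handle the root $\tau^{(1)}$ uniformly, I introduce a virtual parent $\ast$ declared to satisfy ${\it Child}(\ast):=\{\tau^{(1)}\}$, and I write $I_{j-1}^+:=I_{j-1}\cup\{\ast\}$, extending ${\it parent}$ by ${\it parent}(\tau^{(1)}):=\ast$. Throughout, $\emptyset$ values of ${\it child}^*$ and ${\it sibling}^*$ are treated as non-insertions.

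The strengthened invariant I maintain for each $j\in[1,k]$ reads: (I1) $Q_j\cap I_{j-1}=\emptyset$; (I2) every element of $Q_j$ is a $\Gamma$-child of some element of $I_{j-1}^+$; and (I3) for every $v\in I_{j-1}^+$ with ${\it Child}(v)\setminus I_{j-1}\neq\emptyset$, the $\leq^*$-minimum element of ${\it Child}(v)\setminus I_{j-1}$ belongs to $Q_j$. Granting the invariant, the lemma's second claim follows at once from (I1). For the first claim I derive $\tau^{(j)}=\min_{\leq^*}(Q_j)$: given $\tau\in\Omega\setminus I_{j-1}$, trace the path $\tau=\tau_0,\tau_1,\dots$ in $\Gamma$ with $\tau_{i+1}={\it parent}(\tau_i)$ until the first ancestor $\tau_t\in I_{j-1}^+$ is reached; then $\tau_{t-1}\in{\it Child}(\tau_t)\setminus I_{j-1}$, so by (I3) some $\sigma\in Q_j$ satisfies $\sigma\leq^*\tau_{t-1}$, and since $\tau_{t-1}\leq\tau$ componentwise (as a $\Gamma$-ancestor of $\tau$) one has $\sigma\leq^*\tau$. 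Combined with (I1) and (I2), this forces $\min_{\leq^*}(Q_j)=\tau^{(j)}$, which is therefore in $Q_j$.

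The base case $j=1$ is immediate from $Q_1=\{\tau^{(1)}\}={\it Child}(\ast)\setminus I_0$. For the inductive step I track how $Q_j$ is obtained from $Q_{j-1}$ by removing $\tau^{(j-1)}$ and inserting ${\it child}^*(\tau^{(j-1)})$ and ${\it sibling}^*(\tau^{(j-1)})$. Parts (I1) and (I2) for $j$ follow routinely, because both inserted elements strictly exceed $\tau^{(j-1)}$ in $\leq^*$ (hence avoid $I_{j-1}$) and are $\Gamma$-children of elements of $I_{j-1}^+$. The substantive step is (I3), which I split into three cases according to $v\in I_{j-1}^+$: (a) $v\in I_{j-2}^+\setminus\{{\it parent}(\tau^{(j-1)})\}$, where the old representative of $v$ in $Q_{j-1}$ is unaffected by the update and so persists in $Q_j$; (b) $v=\tau^{(j-1)}$, where the required representative is the freshly inserted ${\it child}^*(\tau^{(j-1)})$; and (c) $v={\it parent}(\tau^{(j-1)})$, the most delicate case.

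Case (c) is the principal obstacle. What must be verified is that the old representative of $v$ in $Q_{j-1}$, namely $\min_{\leq^*}({\it Child}(v)\setminus I_{j-2})$, was precisely $\tau^{(j-1)}$ itself, so that removing it and inserting ${\it sibling}^*(\tau^{(j-1)})$ correctly advances $v$'s representative to $\min_{\leq^*}({\it Child}(v)\setminus I_{j-1})$. To close this gap I invoke the consequence of the invariant at stage $j-1$ (derived exactly as above) that $\tau^{(j-1)}=\min_{\leq^*}(Q_{j-1})$. Since $\tau^{(j-1)}\in{\it Child}(v)\setminus I_{j-2}$, the $\leq^*$-minimum of ${\it Child}(v)\setminus I_{j-2}$, which by (I3) at stage $j-1$ lies in $Q_{j-1}$, is both $\leq^*\tau^{(j-1)}$ (by its minimality over a set containing $\tau^{(j-1)}$) and $\geq^*\tau^{(j-1)}$ (by the global minimality of $\tau^{(j-1)}$ in $Q_{j-1}$), forcing equality. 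The remaining identification $\min_{\leq^*}({\it Child}(v)\setminus I_{j-1})={\it sibling}^*(\tau^{(j-1)})$ is then immediate from the defining property of ${\it sibling}^*$, completing the induction.
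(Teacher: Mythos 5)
Your proof is correct and follows essentially the same route as the paper's: both identify $Q_j$ with the set of $\leq^*$-least unexplored children of the already-output vertices (the paper's auxiliary set $P_j$), the paper via an algebraic verification that $P_j$ obeys the same recursion as $Q_j$, and you via an invariant-maintaining induction. Your version additionally spells out two facts the paper leaves implicit --- that $\tau^{(j)}=\min_{\leq^*}(Q_j)$ follows from the frontier property because $\leq^*$ is a linear extension of the componentwise order along $\Gamma$, and that $\mathit{least}(\mathit{Child}(\mathit{parent}(\tau^{(j-1)}))\setminus I_{j-2})=\tau^{(j-1)}$ in your case (c) --- so it is, if anything, more complete.
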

\begin{proof}
Let $P_1=Q_1$ and $P_{j}=\{\it least(Child(\tau)\setminus I_{j-1})\mid \tau\in I_{j-1}\}$ for  $j\in [2, k]$. 
We will show that $P_j = Q_j$ holds for any $j\in [1, k]$, which completes the proof, since 
 $\tau^{(j)} \in P_{j}$ and  $\tau^{(\ell)} \not\in P_{j}$ for all $\ell < j$. 

For $j \in [2, k]$, we have 
 \begin{align*}
 P_{j}
&=\{\it least(Child(\tau)\setminus I_{j-1})\mid \tau\in I_{i-2}\} \cup \{\it least(Child(\tau^{(j-1)})\setminus I_{j-1})\}\\ 
  &=\{\it least(Child(\tau)\setminus I_{j-1})\mid \tau\in I_{j-2}\} \cup \{\it child^*(\tau^{(j-1)})\}, 
  \end{align*}
where we assume that $I_0=\emptyset$. 
 Note that $I_{j-2}$ contains $p:={\it parent(\tau^{(j-1)})}$. This implies  
  \begin{align*} 
 P_{j}
 &=\{\it least(Child(\tau)\setminus I_{j-1})\mid \tau\in I_{j-2}\setminus \{p\}\} \cup \{\it child^*(\tau^{(j-1)}), least(Child(p)\setminus I_{j-1})\}\\
 &=\{\it least(Child(\tau)\setminus I_{j-1})\mid \tau\in I_{j-2}\setminus \{p\}\} \cup \{\it child^*(\tau^{(j-1)}), sibling^*(\tau^{(j-1)})\}.
 \end{align*}
 For any $\tau\in I_{j-2}\setminus \{p\}$, we have $\it least(Child(\tau)\setminus I_{j-1})=\it least(Child(\tau)\setminus I_{j-2})$ because $\tau^{(j-1)}\not \in \it Child(\tau)$ holds. We thus obtain
 \begin{align*}
P_{j}
 &=(\{\it least(Child(\tau)\setminus I_{j-2}) \mid \tau\in I_{j-2}\}\setminus \{\it least(Child(p)\setminus I_{j-2})\}) \cup \{\it child^*(\tau^{(j-1)}), \it sibling^*(\tau^{(j-1)})\}\\
 &=(P_{j-1}\setminus \{\tau^{(j-1)}\}) \cup \{\it child^*(\tau^{(j-1)}), \it sibling^*(\tau^{(j-1)})\}. 
 \end{align*}
From Equation~(\ref{eq:Q}) and $P_1=Q_1$, the desired conclusion follows. 
\end{proof}

We are in a position to give an algorithm for Problem~\ref{prob}. As illustrated in Table~\ref{table}, the algorithm starts by setting $j:=1$ and $Q_1:=\{\tau^{(1)}\}$ and  then    returns $\tau^{(j)}={\it least}(Q_{j})$ for each $j\in [1, k]$, where $Q_j$ is iteratively updated using  Equation~(\ref{eq:Q}).   

\begin{table}[htbp]
\caption{Application of the proposed algorithm to  the input $N$   in Figure~\ref{fig:threefigs} ($k=8$).}
	\label{table}
	\centering
	\scalebox{0.8}{
\begin{tabular}{lcccccccc}
\hline
                            & $j=1$         & $j=2$ & $j=3$         & $j=4$         & $j=5$         & $j=6$ & $j=7$         & $j=8$         \\ \hline
$Q_j$                       & $\{(1\; 1\;1)\}$         & $\{(1\; 1\; 2)\}$ & $\{(2\;1\;1), (2\;1\;2)\}$    & $\{(1\;2\;1), (2\;1\;2)\}$    & $\{(1\;2\;1), (1\;2\;2)\}$         & $\{(1\;2\;2), (2\;2\;1)\}$ & $\{(2\;2\;1), (2\;2\;2)\}$    & $\{(2\;2\;2)\}$         \\
$\tau^{(j)}$                & $(1\;1\;1)$         & $(1\;1\;2)$ & $(2\;1\;1)$         & $(2\;1\;2)$         & $(1\;2\;1)$         & $(1\;2\;2)$ & $(2\;2\;1)$         & $(2\;2\;2)$         \\
$\it child^*(\tau^{(j)})$   & $(1\;1\;2)$         & $(2\;1\;2)$ & $\emptyset$ & $\emptyset$ & $(2\;2\;1)$         & $(2\;2\;2)$ & $\emptyset$ &  \\
$\it sibling^*(\tau^{(j)})$ & $\emptyset$ & $(2\;1\;1)$ & $(1\;2\;1)$         & $(1\;2\;2)$ & $\emptyset$ & $\emptyset$ & $\emptyset$ &  \\ \hline
\end{tabular}
}
\end{table}

In order to analyse the running time of the above algorithm, 
let us review some basics of a \emph{priority queue}, which is a data structure for maintaining objects that are prioritised by their associated values. In its most basic form, a priority queue supports the operations called \textsc{Insert} and \textsc{Delete-min}, where the former refers to adding a new object, and the latter to detecting and deleting the  one with the highest-priority~\cite{introduction2algms}. 
 Implemented with a binary heap, each of these operations can be performed in $O(\log n)$ time, where $n$ denotes the number of the elements in the priority queue~\cite{introduction2algms}. 

\begin{theorem}\label{thm:main}
The top-$k$ support tree ranking problem (Problem~\ref{prob}) can be solved with linear delay, and hence in $O(k|A(N)|)$ time.
\end{theorem}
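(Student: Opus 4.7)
The plan is to turn the recursion of Lemma~\ref{recursionQ} into an explicit algorithm that maintains $Q_{j}$ as a priority queue keyed by $\leq^*$, and then to show that each pass of the main loop runs in $O(|A(N)|)$ time, so that the output is produced with linear delay.

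First I would preprocess the input by applying Proposition~\ref{prop:preprocessing} to each maximal zig-zag trail $Z_i$, obtaining the first element of $(\Omega_i, \leq^*)$ together with its likelihood $f_i(1)$, and hence $\tau^{(1)}=(1,\dots,1)$ and $f(\tau^{(1)})=\prod_{i}f_i(1)$ in $O(\sum_{i}|A(Z_i)|)=O(|A(N)|)$ time. I would then initialise $Q_{1}:=\{\tau^{(1)}\}$ and, for $j=1,\dots,k$, output $\tau^{(j)}:={\it least}(Q_{j})$ via one \textsc{Delete-min}, compute ${\it child}^{*}(\tau^{(j)})$ and ${\it sibling}^{*}(\tau^{(j)})$, and \textsc{Insert} the (at most two) resulting vectors to form $Q_{j+1}$ as prescribed by Equation~(\ref{eq:Q}). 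Correctness is then immediate from Lemma~\ref{recursionQ}.

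The main obstacle is bounding the per-iteration cost by $O(|A(N)|)$. The key point is that the likelihood factorises as $f(\tau)=\prod_{i=1}^{d}f_{i}(\tau_{i})$, so if we maintain on-demand global arrays $F_{i}[\cdot]$ caching the already-computed values $f_{i}(j)$, then every candidate child $\tau+e_{s}$ of $\tau$ satisfies $f(\tau+e_{s})=f(\tau)\cdot f_{s}(\tau_{s}+1)/f_{s}(\tau_{s})$, where $f_{s}(\tau_{s}+1)$ can be produced from $f_{s}(\tau_{s})$ in $O(|A(Z_{s})|)$ time by the second part of Proposition~\ref{prop:preprocessing}. By the definition of $e(\cdot)$, each $\tau\in \Omega$ has at most $d$ children in $\Gamma$ (indexed by a prefix of $\{1,\dots,d\}$), so computing ${\it child}^{*}(\tau^{(j)})$ reduces to a single $\leq^{*}$-scan costing $O(\sum_{s}|A(Z_{s})|)=O(|A(N)|)$; the same estimate handles ${\it sibling}^{*}(\tau^{(j)})$, for which we need only ${\it parent}(\tau^{(j)})=\tau^{(j)}-e(\tau^{(j)})$ and its children.

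Finally, since $|\Omega_{i}|\leq 2^{|A(Z_{i})|}$ in each of the three cases of Theorem~\ref{structure}, we have $|\Omega|\leq 2^{|A(N)|}$ and hence $\log k\leq \log|\Omega|\leq |A(N)|$. Because $|Q_{j}|=O(k)$ throughout, a binary heap makes each \textsc{Insert} and \textsc{Delete-min} cost $O(\log k)=O(|A(N)|)$ time, with each scalar likelihood comparison taking $O(1)$ and the occasional ties from the lexicographic refinement of $\leq^{*}$ broken by an attached unique serial number. Summing these contributions, the delay between consecutive outputs is $O(|A(N)|)$ and the total running time is $O(k|A(N)|)$, as claimed.
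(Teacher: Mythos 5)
Your proposal follows essentially the same route as the paper's proof: maintain $Q_j$ in a priority queue, extract $\tau^{(j)}=\mathit{least}(Q_j)$ by \textsc{Delete-min}, invoke Lemma~\ref{recursionQ} for correctness, and use Proposition~\ref{prop:preprocessing} together with the decomposition $\{Z_1,\dots,Z_d\}$ to compute $\mathit{child}^*(\tau^{(j)})$ and $\mathit{sibling}^*(\tau^{(j)})$ in $O(|A(N)|)$ time, with heap operations bounded by $O(\log k)\leq O(|A(N)|)$. Your added implementation details (the prefix structure of $\mathit{Child}(\tau)$ and the cached local likelihoods) usefully elaborate what the paper leaves implicit; just note that breaking ties by a serial number rather than lexicographically yields a valid top-$k$ list but not necessarily the canonical ranking $(\Omega,\leq^*)$ defined in Section~\ref{sec:problem.description}.
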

\begin{proof}
As  Equation~(\ref{eq:Q}) implies that  $|Q_{j+1}-Q_j|\leq 1$ holds for any $j\in[1, k-1]$,  $|Q_j| \leq k$ holds for any $j\in [1,k]$. 
Then, if we keep the elements of each $Q_{j}$ in a priority queue,  $O(\log k)$ time suffices  to  return $\tau^{(j)}$ and to delete $\tau^{(j)}$ from $Q_{j}$. 
Also, once $\it child^*(\tau^{(j)})$ and $\it sibling^*(\tau^{(j)})$ have been obtained, inserting the two elements requires $O(\log k)$ time. We note that $O(\log k)\leq O(|A(N)|)$ follows from $k\leq 2^{|A(N)|}$. 
By Proposition~\ref{prop:preprocessing}, for each $j\in[1, k-1]$, one can compute $\{\it child^*(\tau^{(j)}), \it sibling^*(\tau^{(j)})\}$ in $\sum_{i=1}^{d}{O(|A(Z_i)|)}$ time, which equals $O(|A(N)|)$ time as $\{Z_1,\dots,Z_d\}$ is a decomposition of $N$. Hence, our algorithm can return $\tau^{(1)},\dots, \tau^{(k)}$ one after the other in such a way that the delay between two consecutive outputs is $O(|A(N)|)$ time. This completes the proof.
\end{proof}

Finally, we make two remarks. First, $\Omega(k|A(N)|)$ time is required to output $k$ distinct support trees of $N$ as each support tree has size $\Omega(|A(N)|)$. Therefore, the running time of our algorithm (as well as that of the enumeration algorithm in~\cite{MH2018structural}) is $\Theta(k|A(N)|)$, which guarantees the optimality of those algorithms. Second, as commonly in the literature (\textit{e.g.},~\cite{kapoor1995algorithms}), it would be natural to wonder about the time complexity of an analogue of Problem~\ref{prob} that only asks for outputting a sequence of the differences between $\tau^{(j-1)}$ and $\tau^{(j)}$; however, we note that this problem still requires $\Omega(k|A(N)|)$ time because the size of each difference is $\Omega(|A(N)|)$. To illustrate this, consider a tree-based phylogenetic $X$-network  $N$ that is decomposed into maximal fences, each of which has only one admissible arc-set, and $c$ crowns, each of which has size $\Omega(|A(N)|/c)$. The difference between any two support trees has size  $\Omega(|A(N)|/c)$, which equals $\Omega(|A(N)|)$ if $c$ is a constant. 

%
%

\bibliographystyle{amsplain}

\bibliography{kbestsubdivtrees.bib}

\section*{Acknowledgement}
The first author acknowledges support from JST PRESTO Grant Number JPMJPR16EB.

\end{document}